\newtheorem{theorem}{Theorem}[section]
\newtheorem{lemma}[theorem]{Lemma}
\newtheorem{proposition}[theorem]{Proposition}
\newtheorem{remark}[theorem]{Remark}
\def\square{\hbox{\vrule\vbox{\hrule\phantom{o}\hrule}\vrule}}
\def\R{\mathbb {R}}
\def\S{\mathbb {S}}
\def\tendsto{\rightarrow}
\def\re{\mathop{\rm Re}\nolimits}
\def\im{\mathop{\rm Im}\nolimits}
\def\O{\mathcal O}
\def\eps{\varepsilon}
\newcommand{\indic}{1\!\!1}
\newcommand{\wbar}{\overline{w}}
\newcommand{\be}{\begin{equation}}
\newcommand{\ee}{\end{equation}}
\numberwithin{equation}{section}
\begin{document}

\title[Resonance widths for Helmholtz resonators]{Resonance widths for general Helmholtz Resonators with straight neck}
 
\begin{abstract} 
We prove an optimal exponential 
lower bound on the width of the resonance associated to the first eigenvalue of the cavity for a general Helmholtz resonator with straight neck, in any dimension. 
\end{abstract}

\author{ Thomas Duyckaerts${}^1$, Alain Grigis${}^1$ \& Andr\'e Martinez${}^2$}

\subjclass[2000]{Primary 81Q20 ; Secondary 35P15, 35B34}
\keywords{Helmholtz resonator, scattering resonances, lower bound}

\maketitle

\addtocounter{footnote}{1}
\footnotetext{Universit\'e Paris 13, Institut Galil\'ee, D\'epartement de Math\'ematiques, avenue J.-B. Cl\'ement, 93430 Villetaneuse, France. TD is partially supported by ANR grant SchEq, ERC grant Dispeq and ERC advanced grant BLOWDISOL.}
\addtocounter{footnote}{1}
\footnotetext{Universit\`a di Bologna, Dipartimento di
Matematica, Piazza di Porta San Donato 5, 40127 Bologna,
Italy.}

\setcounter{section}{0}

\section{Introduction}

A resonator consists of a bounded cavity (the chamber) connected to the exterior by a thin tube (the neck of the chamber). 
The frequencies of the sounds it produces are determined by the shape of the chamber, while their duration by the length and the width of the neck  in a non-obvious way,
and our goal is to understand these. Mathematically, this phenomenon is described by the resonances of the
Dirichlet Laplacian $-\Delta_\Omega$ on the domain $\Omega$ consisting of the union of the chamber, 
the neck and the exterior.

We recall that resonances are the eigenvalues of a complex deformation of $-\Delta_\Omega$;  their real and imaginary
parts are the frequencies and inverses of the half-lives, respectively, of the corresponding vibrational modes. It is of obvious physical 
interest to estimate these two quantities as precisely as  possible. One practical way to do this involves studying 
this problem in the asymptotic limit when the width $\varepsilon$ of the neck tends to zero. Those resonances with imaginary parts 
tending to zero converge to the eigenvalues of the Dirichlet Laplacian on the cavity, and there is an exponentially small upper bound 
for the absolute values of the imaginary parts (the widths) of the resonances.

 Many papers have been devoted to this problem (see, e.g., \cite{Be, Fe, HM, MN1, MN2}, and references therein). For instance, in \cite{Be}, it is proved that, as the width of the neck becomes small, these resonances approach either an eigenvalue of the cavity, or a resonance of the exterior region.  In \cite{Fe}, a construction of states that are concentrated in the cavity is obtained, with lower estimates on their sojourn time (and thus, upper estimates on the widths of resonances, that is, the absolute value of their imaginary parts). In \cite{HM}, a very general exponentially small upper bound on the widths of resonances is proved. 
 
 However, without very 
restrictive hypotheses, no lower bound is known. We mention in particular that lower bounds are known in
the one-dimensional case \cite{Ha, HaSi}. As for the higher dimensional case, we can mention \cite{FL, Bu, HS, FLM} 
which contain results concerning exponentially small widths of quantum resonances, but these do not apply to a Helmholtz resonator (in particular, the semiclassical lower bound obtained in \cite{HS} is optimal).

In \cite{MN1}, an optimal lower bound has been optioned for very particular two-dimensional Helmholtz resonators, for which the exterior consists of an infinite straight half tube. Then, the result has been extended in \cite{MN2} to much more general two-dimensional Helmholtz resonators, under the condition that the neck meets the boundary of the external region perpendicularly  to it, and that the exterior region is concave and symmetric there. Moreover, an extension to larger dimensions (up to 12) was obtained, but only for necks with a square section.

Here, we plan to generalize the result to any $n$-dimensional Helmholtz resonator with straight neck, without particular assumption on the section of the neck or on the boundary near the mouth of the neck.
 (see Theorem \ref{mainth}). 
 
 As in \cite{MN2}, the problem is first related to a lower bound on the resonant function in a large annulus. Assuming, by contradiction, that this function is small there, the smallness can be propagated up to a small neighborhood of the end part of the neck, by means of general Carleman estimates. In order to have a good enough control on the rate of decay near the mouth of the neck, however, we need stronger results. These are furnished to us by the powerful tool of the so-called  ``limiting Carleman weights'' developed in \cite{KSU}. Indeed, they give a framework where more precise Carleman estimates can be done, leading to an exponential decay estimate on the resonant function with a rate going to zero as we approach the neck, but not too quickly. Afterwards, this permits us to propagate the estimate in the inside part of the neck, this time by using explicit  Carleman estimates.
At that point, the contradiction is obtained as in \cite{MN2}, by using a result of \cite{BHM1, BHM2} on the size of the Dirichlet eigenfunctions of the cavity.

\section{Assumptions and results}

Let $\mathcal C$ and $\mathcal B$ be two bounded open sets in $\R^n$ ($n\geq 2$), with $\mathcal C^\infty$ boundary, and denote by $\overline{\mathcal C}$, $\overline{\mathcal B}$
their closures, and by $\partial{\mathcal C}$, 
$\partial{\mathcal B}$ their boundaries. We assume that $\mathcal C$ is connected, $\mathcal B$ is contractible,  and that Euclidean coordinates $x=(x_1,\dots, x_n)=:(x_1,x')$ can be chosen in such a way that, for some 
$L>0$,  one has, 
\be
\label{hyp1}
\overline{\mathcal C}\subset \mathcal B\quad; \quad 0\in \partial{\mathcal C} \quad; \quad (L,0_{\R^{n-1}})\in \partial{\mathcal B}\quad; \quad
{[0,L]}\times\{0_{\R^{n-1}}\}\subset \overline{\mathcal B}\backslash {\mathcal C}.
\ee
We also assume,
\be
\label{hyp2}
{[0,L]}\times\{0_{\R^{n-1}}\} \mbox{ is transversal to } \partial{\mathcal B} \mbox{ at } (L,0_{\R^{n-1}}),\mbox{ and to }\partial{\mathcal C} \mbox{ at } 0_{\R^{n}}.
\ee

Let  $D_1\subset \R^{n-1}$ be a bounded domain containing the origin, with smooth boundary $\partial D_1$. For $\varepsilon >0$ small enough, we set $D_\varepsilon:=\varepsilon D_1$ and,
$$
\begin{aligned}
&{\mathbf E}:= \R^n \backslash \overline{\mathcal B};\\
&{\mathcal T}(\varepsilon):= ([-\varepsilon_0, L+\varepsilon_0]\times D_{\varepsilon}) \cap \left(\R^n\backslash ({\mathbf E}\cup{\mathcal C})\right);\\
&\mathcal C(\varepsilon) = {\mathcal C} \cup \mathcal{T}(\varepsilon),
\end{aligned}
$$
where $\varepsilon_0>0$ is fixed sufficiently small in order that $[-\varepsilon_0, L+\varepsilon_0]\times\{0_{\R^{n-1}}\}$ crosses $\partial{\mathcal C}$ and $\partial{\mathcal B}$ at one point only.
Then,
the resonator is defined as, 
$$\Omega(\varepsilon):={\mathcal C}(\varepsilon) \cup{\mathbf E}.$$

For any domain $Q$, let $P_Q$ denote the Laplacian $-\Delta_Q$ with Dirichlet boundary conditions on $\partial Q$,
and set $P_\varepsilon:= P_{\Omega_\varepsilon}$. 

The resonances of $P_\varepsilon$ are defined as the eigenvalues of the operator obtained by performing a complex dilation with
respect to $x$, for $|x|$ large. 

\begin{remark}\sl
As $\varepsilon \tendsto 0_+$, the resonator $\Omega(\varepsilon)$ collapses to $\Omega_0:={\mathcal C}\cup [0,M_0]
\cup{\mathbf E}$, where $M_0$ is the point $(L,0_{\R^{n-1}})\in\R^n$. In particular, for $\varepsilon =0$, the interior $\mathring{\Omega}_0={\mathcal C}
\cup{\mathbf E}$ of $\Omega_0$ is such that the resonances of $P_{\mathring{\Omega}_0}$ consist of the eigenvalues of $P_{\mathcal C}$ and the resonances of $P_{\mathbf E}$. It is well known that $P_{\mathbf E}$ has no imbedded eigenvalues, and thus its resonances (that are of course independent of $\varepsilon$) stay away from the real line. Moreover, by the results of \cite{Be, HM}, we know that the set of the resonances of $P_{\mathring{\Omega}_0}$ (that includes the eigenvalues of $P_{\mathcal C}$)  is nothing but the limit set of those of $P_\varepsilon$ as $\varepsilon \to 0_+$.
\end{remark}

We are interested in those
resonances of $P_\varepsilon$ that are close to the eigenvalues of $P_{\mathcal C}$. Thus, let $\lambda_0 >0$ be an eigenvalue 
of $P_{\mathcal{C}}$ with $u_0$ the  corresponding normalized eigenfunction. As in \cite{MN2}, we assume,

\be
\label{hyp3}
\lambda_0 \mbox{ is the lowest eigenvalue of } P_{\mathcal C}.
\ee

In the sequels, we denote by $u_0$ the corresponding positive normalized eigenfunction of $P_{\mathcal C}$.

By the arguments of \cite{Be, HM}, we know that there is a unique resonance $\rho(\varepsilon)\in\mathbb{C}$ of  $P_\varepsilon$ such that 
$\rho(\varepsilon)\to \lambda_0$ as $\varepsilon\to 0_+$. Furthermore, denoting by $\alpha_0$ the square root of the first eigenvalue of $-\Delta_{D_1}$, there is an eigenvalue $\lambda(\varepsilon)$ of 
$P_{\mathcal{C}(\varepsilon)}$ such that, for any $\delta>0$, 
\be
\label{vp}
|\rho(\varepsilon)-\lambda(\varepsilon)|\leq C_\delta e^{-2\alpha_0(1-\delta)L/\varepsilon},
\ee
for some $C_\delta>0$ and all sufficiently small $\varepsilon>0$. In particular, since $\lambda (\varepsilon) \in \mathbb R$, 
this gives 
\be
\label{upperbound}
|\im \rho(\varepsilon)|\leq C_\delta e^{-2\alpha_0(1-\delta)L/\varepsilon}.
\ee

We now state our main result.
\begin{theorem}\sl 
\label{mainth}
Under Assumptions (\ref{hyp1})-(\ref{hyp3}), for any $\delta >0$ there exists $C_\delta >0$ such that, for all 
$\varepsilon >0$ small enough, one has,
$$
|\im \rho(\varepsilon)|\geq  \frac1{C_\delta}e^{-2\alpha_0 (1+\delta)L/\varepsilon}.
$$
\end{theorem}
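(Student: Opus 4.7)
The plan is to argue by contradiction, following the general scheme of \cite{MN2} but replacing the two-dimensional geometric arguments with a global propagation of smallness in the exterior that works in arbitrary dimension. First I would relate $|\im \rho(\varepsilon)|$ to the $L^2$-norm of the resonant function $u_\varepsilon$ (normalized so that its restriction to $\mathcal C$ is close to $u_0$) on a large annulus in the exterior region $\mathbf E$. As in \cite{MN2}, a Green-identity computation between $u_\varepsilon$ and the outgoing resolvent kernel gives a formula of the form
\begin{equation*}
\im \rho(\varepsilon) = -c_n\int_{S_R} |\tilde u_\varepsilon|^2\,d\sigma + o(\cdot),
\end{equation*}
so a contradiction with the lower bound we want to prove is equivalent to assuming that $u_\varepsilon$ is exponentially small, of order $e^{-2\alpha_0(1+\delta)L/\varepsilon}$, on such an annulus $\{R_1\le |x|\le R_2\}\subset \mathbf E$.

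Next I would propagate this smallness from the annulus up to an arbitrarily small neighborhood of the mouth $M_0=(L,0_{\R^{n-1}})$. This is the main obstacle: standard Carleman estimates (say with a quadratic weight) would lose too much because the weight has to become singular, or at least very flat, as one approaches $M_0$. This is where the limiting Carleman weights of \cite{KSU} enter: in a neighborhood of $M_0$ in $\overline{\mathbf E}$, one can build a limiting weight $\varphi$ that is harmonic for the Euclidean metric (essentially a logarithmic distance to a point placed on the far side of $\partial\mathcal B$) and that admits the refined Carleman estimate of \cite{KSU} with convex perturbations. Integrating this estimate, together with the transversality hypothesis \eqref{hyp2} and smoothness of $\partial\mathcal B$ at $M_0$, yields an exponential bound of the form
\begin{equation*}
|u_\varepsilon(x)|\le C e^{-\alpha_0(1+\delta/2)L/\varepsilon}\, e^{(\varphi(x)-\varphi(M_0))/h(\varepsilon)}
\end{equation*}
in a neighborhood of $M_0$ in $\overline{\mathbf E}$, with a rate $h(\varepsilon)$ going to zero slowly enough that the prefactor is preserved but fast enough that one controls $u_\varepsilon$ arbitrarily close to $M_0$.

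Then I would propagate the resulting bound on $u_\varepsilon$ and its normal derivative at a cross-section near $M_0$ through the neck $\mathcal T(\varepsilon)$. Here the geometry is a product $(x_1,x')$ with $x'\in D_\varepsilon$, so one can use an explicit Carleman weight $\varphi(x_1)=-\alpha_0(1+\delta)(L-x_1)/\varepsilon$ adapted to the transverse eigenvalue $\alpha_0^2/\varepsilon^2$ of $-\Delta_{D_\varepsilon}$, exactly as in the one-dimensional part of \cite{MN2}. This produces a quantitative estimate of $u_\varepsilon$ at the chamber end $x_1=0$ of the form
\begin{equation*}
\|u_\varepsilon\|_{L^2(\{0\}\times D_\varepsilon)}+\varepsilon\|\partial_{x_1} u_\varepsilon\|_{L^2(\{0\}\times D_\varepsilon)} \le C\, e^{-\alpha_0(1+\delta)L/\varepsilon}\cdot e^{-\alpha_0(1+\delta)L/\varepsilon}.
\end{equation*}

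Finally I would derive a contradiction by matching this bound with the lower bound on Dirichlet eigenfunctions at the boundary supplied by \cite{BHM1, BHM2}: since $u_0$ is the positive ground state of $P_{\mathcal C}$ and $0\in\partial\mathcal C$ is a smooth point met transversally by the neck, the normal derivative $\partial_\nu u_0(0)$ is nonzero, and a perturbative argument (viewing the eigenfunction $u_\varepsilon$ of $P_{\mathcal C(\varepsilon)}$ close to $\lambda(\varepsilon)$ as a small deformation of $u_0$, then using \eqref{vp}) forces $u_\varepsilon$ to be of size at least $c\,\varepsilon$ on a disk of radius $\sim\varepsilon$ just inside the mouth $x_1=0$. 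Combined with the Carleman bound above, this gives $c\,\varepsilon^{n/2}\le C e^{-2\alpha_0(1+\delta)L/\varepsilon}\cdot e^{+2\alpha_0(1+\delta)L/\varepsilon}\cdot|\im\rho(\varepsilon)|^{1/2}$-type inequality, hence a polynomial lower bound on $|\im\rho(\varepsilon)|\,e^{2\alpha_0(1+\delta)L/\varepsilon}$, which is exactly the desired exponential lower bound after absorbing the polynomial into $\delta$. The main technical work, and the place where the dimension-free improvement over \cite{MN2} occurs, is in the second paragraph: setting up the limiting Carleman weight on $\overline{\mathbf E}$ near $M_0$ so that the decay rate it provides is both strong enough to survive the propagation through the neck and weak enough not to be destroyed by the Dirichlet boundary condition on $\partial\mathcal B$.
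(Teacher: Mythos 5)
Your overall roadmap is indeed the paper's: argue by contradiction via the Green/flux identity (Proposition \ref{LDisc}), propagate smallness in $\mathbf E$ up to a shrinking neighborhood of $M_0$ with the logarithmic limiting Carleman weight of \cite{KSU} (Proposition \ref{P_superlinear}, pole placed just inside $\mathcal B$), push through the neck with an affine weight $\alpha_1 x_1/\eps$ (Lemma \ref{L_Carleman}; the paper also adds the convex perturbation $x_1^2/2$, which you omit but which produces the positivity of the commutator, and it gives a self-contained proof because the $\eps$-dependence of $\Omega(\eps)$ blocks a direct citation of \cite{KSU}), and finally contradict a Hopf-lemma lower bound on $u_0$ near $0\in\partial\mathcal C$.

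The step through the neck, however, is wrong as stated, and this makes the final contradiction more delicate than you describe. The bound $\|u_\varepsilon\|_{L^2(\{0\}\times D_\varepsilon)}+\varepsilon\|\partial_{x_1} u_\varepsilon\|_{L^2(\{0\}\times D_\varepsilon)}\leq C e^{-2\alpha_0(1+\delta)L/\varepsilon}$ cannot hold: at $x_1=0$, $u_\eps$ agrees with the Dirichlet eigenfunction $v_\eps$ of $P_{\mathcal C(\eps)}$ up to $\O(e^{-(\alpha_0L-\delta)/\eps})$, and the trace of $v_\eps$ on $\{0\}\times D_\eps$ has polynomial size, of order $\eps^{(n+1)/2}$, not exponential smallness. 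The Carleman weight $e^{\alpha_1 x_1/\eps}$ inevitably loses its gain as $x_1\to 0$, because the commutator term on the chamber side of the cutoff is only $O(1)$ in the unweighted norm; the correct output (Proposition \ref{P_neck}) is the much weaker statement $\|u_\eps\|_{H^1([r_0,L]\times D_\eps)}=\O(e^{-(\alpha_0+\delta_1)r_0/\eps})$ for $r_0\in(0,L)$, i.e. a small exponential improvement over the natural decay rate $e^{-\alpha_0 r_0/\eps}$, with no control at $x_1=0$. You therefore cannot confront a Hopf-lemma lower bound directly at the mouth. The paper instead transfers this estimate to $v_\eps$, expands $v_\eps$ in transverse modes, deduces exponential smallness of the modal coefficients (in particular $a_0^{-}=\O(e^{-\delta_1/\eps})$), uses the positivity of $v_\eps$ and $\varphi_0$ together with a boundary Harnack inequality for parabolic equations to propagate this smallness onto $\mathcal C\cap\{|x|\leq C\eps\}$, and only then contradicts $u_0(x)\geq c\,d(x,\partial\mathcal C)$ near $0$.
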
  
 \begin{remark}\sl
Gathering (\ref{upperbound}) and Theorem \ref{mainth}, we can reformulate the result as,
\be
\lim_{\varepsilon\to 0_+} \varepsilon \ln |\im \rho(\varepsilon) | = -2\alpha_0 L.
\ee
 \end{remark}
  \begin{remark}\sl
In the case where $\mathcal C$ is not connected (or $\mathcal B$ is not conttractible), the result is still valid under a straightforward additional condition on $\lambda_0$. Indeed, in this case the obstacle may contain several connected cavities (and they may admit $\lambda_0$ as eigenvalue, or not). However, in order that the result remains valid, it is sufficient to assume that the tube ${\mathcal T}(\eps)$ connects the exterior ${\mathbf E}$ with one of the connected cavities admitting $\lambda_0$ as a simple eigenvalue.
 \end{remark}
 
 \section{Background properties}

By definition, the resonance $\rho (\varepsilon)$ is an eigenvalue of
the complex
 distorted operator,
$$
P_\varepsilon (\mu):= U_\mu P_\varepsilon U_\mu^{-1},
$$
where $\mu>0$ is a small parameter, and $U_\mu$ is a complex
distortion of the
 form,
$$
U_\mu \varphi (x):= \varphi (x+i\mu f(x)),
$$
with $f\in C^\infty (\R^n; \R^n)$, $f= 0$ near $\overline{\mathcal B}$, $f(x) =x$ for $|x|
$ large enough. (In principle, $U_\mu$ may act on holomorphic functions $\varphi$ only, but the explicit form of $U_\mu P_\varepsilon U_\mu^{-1}$ as a second order differential operator allows to make act $P_\varepsilon (\mu)$ on $L^2(\R^n)$ with domain $H^2(\R^n)$ ; 
moreover, by Weyl Perturbation Theorem, the essential spectrum of
$P_\varepsilon (\mu)$ 
is  $e^{-2i\theta}\R_+$,
 with $\theta =\arctan\mu$.) 

It is well known that such eigenvalues do not depend on $\mu$ (see,
e.g., \cite{SZ, HeM}), and that the corresponding eigenfunctions are
of the form $U_\mu u_\varepsilon$
 with $u_\varepsilon$ independent of $\mu$, smooth on $\R^n$ and
 analytic in a complex sector around ${\mathbf E}$. In other words, $u_\varepsilon$ is a non trivial analytic solution of the equation $-\Delta u_\varepsilon = \rho (\varepsilon) u_\varepsilon$ in $\Omega(\varepsilon)$, such that $u_\varepsilon\left\vert_{\partial\Omega(\varepsilon)}\right. =0$ and, for all $\mu >0$ small enough,  $U_\mu u_\varepsilon$ is well defined and is in $L^2(\Omega(\varepsilon))$ (in our context, this latter property will be taken as a definition of the fact that $u_\varepsilon$ is {\it outgoing}). Moreover,
 $u_\varepsilon$ can be 
normalized by setting, for some fixed $\mu>0$,
$$
\Vert U_\mu u_\varepsilon\Vert_{L^2(\Omega(\varepsilon ))} =1.
$$

In that case, we learn from \cite{HM} (in particular Proposition 3.1
and formula
 (5.13)), that, for any $\delta >0$, and for any $R>0$ large enough, one has,
\be\label{ugrand}
\|u_\varepsilon\|_{L^2(\Omega(\varepsilon)\cap\{ |x|<R\})}= 1+ \O(e^{(\delta-\alpha_0)L/\varepsilon}),
\ee and 
\be
\label{upetit}
\|u_\varepsilon\|_{H^1( {\mathbf E}\cap\{ |x|<R\})}=  \O(e^{(\delta-\alpha_0))L/\varepsilon}).
\ee

Now, we take $R>0$ such that $\overline{\mathcal B} \subset \{ |x| <R\}$. Using the equation $-\Delta u_\varepsilon = \rho u_\varepsilon$
and Green's
 formula on the domain $\Omega(\varepsilon)\cap\{ |x|<R\}$, and using polar coordinates $(r,\omega)$, we obtain,
$$
\im\rho \int_{\Omega(\varepsilon)\cap\{ |x|<R\}  } |u_\varepsilon|^2dx = -\im \int_{\S^{n-1}} \frac {\partial u_\varepsilon}{\partial r}(R,\omega)\overline u_\varepsilon (R,\omega)R^{n-1}d\sigma_{n-1}(\omega),
$$
(where $d\sigma_{n-1}(\omega)$ stands for the surface measure on $\S^{n-1}$),
and thus, by (\ref{ugrand}), and for any $\delta >0$,
\be
 \label{green}
\im\rho =-(1+\O(e^{(\delta-2\alpha_0)L/\varepsilon}))\,\,\im \int_{\S^{n-1}} \frac {\partial u_\varepsilon}{\partial r}(R,\omega)\overline u_\varepsilon (R,\omega)R^{n-1}d\sigma_{n-1}(\omega)\ee
where the $\O$ is locally uniform with respect to $R$.

Therefore, to prove our result, it is sufficient to obtain a lower
bound on 
$\im \int_{\S^{n-1}} \frac {\partial u_\varepsilon}{\partial r}(R,\omega)\overline u_\varepsilon (R,\omega)R^{n-1}d\sigma_{n-1}(\omega)$. 
Note that, by using (\ref{upetit}), we immediately obtain (\ref{upperbound}).

Starting from this formula, the following proposition has been proved in \cite{MN2} (the proof is actually done in 2 dimensions only, but can be generalized easily to any dimension: see \cite{MN2}, Remark 4.6):
\begin{proposition}[Martinez-N\'ed\'elec \cite{MN2}]\sl
\label{LDisc} \sl Let $R_1>R_0>0$ be fixed in such a way that  $\overline{\mathcal B}\subset \{ |x| < R_0\}$. Then, for any $C>0$, there exists a constant $C'=C'(R_0, R_1,C)>0$ such that,  for all $\varepsilon >0$ small enough, one has,
$$
|\im\rho| \geq \frac1{C'}\Vert u_\varepsilon\Vert^2_{L^2(R_0< |x| < R_1)} - C'e^{-C/\varepsilon}.
$$
\end{proposition}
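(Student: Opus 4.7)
The plan is to combine an exact Green's formula on the annulus $\{R_0<|x|<R_1\}$ with a spherical-harmonic analysis of the outgoing exterior solution $u_\varepsilon$. Since $u_\varepsilon$ solves $(-\Delta-\rho)u_\varepsilon=0$ throughout the annulus (which lies in $\mathbf{E}$), integration by parts yields the exact identity
$$
\im\rho\,\|u_\varepsilon\|^2_{L^2(R_0<|x|<R_1)}=\im I(R_0)-\im I(R_1),
$$
where $I(R)=R^{n-1}\!\int_{\S^{n-1}}\partial_r u_\varepsilon(R,\omega)\,\overline{u_\varepsilon(R,\omega)}\,d\sigma(\omega)$ is exactly the quantity appearing in (\ref{green}). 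Since (\ref{green}) already provides $|\im I(R_0)|\le 2|\im\rho|$ for $\varepsilon$ small, the proposition reduces to establishing a one-sided comparison
$$
\|u_\varepsilon\|^2_{L^2(R_0<|x|<R_1)}\le C\,\im I(R_0)+\O(e^{-C/\varepsilon})
$$
for any prescribed $C>0$.

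To obtain this comparison, I would expand $u_\varepsilon$ in spherical harmonics on $\{|x|>R_0\}$ (legitimate because $\overline{\mathcal B}\subset\{|x|<R_0\}$): write $u_\varepsilon(r\omega)=\sum_k c_k\,\phi_k(r)\,Y_k(\omega)$, where $\{Y_k\}$ is an orthonormal basis of eigenfunctions of $-\Delta_{\S^{n-1}}$ indexed by $\nu_k=k+\tfrac{n-2}{2}$, and each $\phi_k(r)$ is the outgoing Bessel-type radial mode proportional to $r^{-(n-2)/2}H^{(1)}_{\nu_k}(\sqrt{\rho}\,r)$. Parseval then gives
$$
\|u_\varepsilon\|^2_{L^2(R_0<|x|<R_1)}=\sum_k|c_k|^2\!\int_{R_0}^{R_1}\!r^{n-1}|\phi_k(r)|^2\,dr,\quad \im I(R)=R^{n-1}\!\sum_k|c_k|^2\,\im\bigl(\phi_k'(R)\,\overline{\phi_k(R)}\bigr),
$$
and the Wronskian relation for $H^{(1)}_{\nu_k}$ renders each flux term non-negative.

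The comparison will then be done modewise, with a cutoff at $\nu_k=K$. For low modes $\nu_k\le K$, with $K=K(\re\rho,R_0,R_1)$ fixed, the outgoing radial mode is oscillatory on $[R_0,R_1]$ and Hankel asymptotics produce both a uniform lower bound $R_0^{n-1}\im(\phi_k'(R_0)\overline{\phi_k(R_0)})\ge c_0>0$ and a uniform upper bound on $\int_{R_0}^{R_1}r^{n-1}|\phi_k|^2\,dr$; summing shows that the low-mode part of $\|u_\varepsilon\|^2_{L^2(\mathrm{ann})}$ is at most a constant multiple of the low-mode part of $\im I(R_0)$. For high modes $\nu_k>K$ the turning point $\nu_k/\sqrt{\re\rho}$ lies beyond $R_1$, the mode is in the classically forbidden regime with $|\phi_k(r)/\phi_k(R_0)|^2\lesssim(R_0/r)^{2\nu_k}$, and one computes $\int_{R_0}^{R_1}r^{n-1}|\phi_k/\phi_k(R_0)|^2\,dr=\O(\nu_k^{-1})$. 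Combined with the Parseval identity $\sum_k|c_k\phi_k(R_0)|^2=\|u_\varepsilon(R_0,\cdot)\|^2_{L^2(\S^{n-1})}$, the trace theorem, and interior elliptic regularity used to upgrade (\ref{upetit}) into an $H^s$ bound with $s$ arbitrarily large, the high-mode tail of $\|u_\varepsilon\|^2_{L^2(\mathrm{ann})}$ is controlled by $C'e^{-C/\varepsilon}$ for any prescribed $C$, provided the cutoff $K=K(C)$ and the regularity index $s=s(C)$ are chosen large enough.

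The main technical obstacle will be precisely this uniform high-frequency control: the only $\O(\nu_k^{-1})$ gain from individual mode integrals must be compensated by Sobolev decay of the trace $u_\varepsilon(R_0,\cdot)$, so one has to balance the mode cutoff against the regularity index with which (\ref{upetit}) is exploited.
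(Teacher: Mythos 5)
The paper does not actually prove Proposition~\ref{LDisc}; it cites \cite{MN2}, so there is no in-paper argument to compare against. Your reduction via the Green identity $\im\rho\,\|u_\varepsilon\|^2_{L^2(R_0<|x|<R_1)}=\im I(R_0)-\im I(R_1)$ and the observation $|\im I(R_0)|\le 2|\im\rho|$ is correct, and the outgoing Hankel-mode expansion with the mode-independent Wronskian flux $R_0^{n-1}\im(\phi_k'(R_0)\overline{\phi_k(R_0)})=\tfrac{2}{\pi}+\O(|\im\rho|)$ is also sound. The low-mode comparison (finitely many modes, bounded weight $\int_{R_0}^{R_1}r^{n-1}|\phi_k|^2\,dr$) is fine.

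The gap is in the high-mode tail. You need, for a mode cutoff $K$ and Sobolev index $s$ that are \emph{fixed independent of $\varepsilon$}, the bound
$$
\sum_{\nu_k>K}|c_k|^2\int_{R_0}^{R_1}r^{n-1}|\phi_k|^2\,dr \;\lesssim\; K^{-2s}\,\|u_\varepsilon(R_0,\cdot)\|^2_{H^s(\S^{n-1})}\;=\;\O\!\left(e^{-C/\varepsilon}\right)
$$
for \emph{arbitrary} $C$. But the only input you have on the trace is \eqref{upetit} upgraded by elliptic regularity, which gives $\|u_\varepsilon(R_0,\cdot)\|^2_{H^s(\S^{n-1})}\le C_{s,\delta}\,e^{2(\delta-\alpha_0)L/\varepsilon}$ for every $s$ and every small $\delta>0$, with constant depending on $s,\delta$ but the \emph{same exponential rate}. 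Increasing $s$ or $K$ yields only a polynomial prefactor $K^{-2s}$; it does not change the exponent. Hence the error you obtain is $\O(e^{-2(\alpha_0-\delta)L/\varepsilon})$, i.e. the achievable $C$ is strictly less than $2\alpha_0L$. Letting $K$ grow with $\varepsilon$ does not rescue this: the low-mode constant $C_K\sim\max_{\nu_k\le K}\int_{R_0}^{R_1}r^{n-1}|\phi_k|^2\,dr$ blows up super-exponentially in $K$ (since $|H^{(1)}_{\nu}(\lambda R_0)|$ does), so $K\sim 1/\varepsilon$ destroys the main term.

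This is not a cosmetic issue: the proposition is invoked in the paper with $C\ge 2(\alpha_0+\delta_0)L$ to pass from \eqref{absurd} to \eqref{decu}, which is strictly beyond the range your estimate reaches. Moreover, for a \emph{single} high mode the ratio (annulus norm)/(flux) is genuinely exponentially large in $\nu_k$, so no uniform modewise comparison $\|u\|^2_{L^2(\mathrm{ann})}\le C\,\im I(R_0)$ can hold; some additional mechanism is needed to show that the resonant state $u_\varepsilon$ cannot concentrate its annulus mass on angular frequencies above the classical threshold with a loss larger than $e^{-C/\varepsilon}$ for every $C$, and the current combination of a polynomial Sobolev gain with a fixed-rate exponential a priori bound does not supply it.
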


Then, reasoning by contradiction as in \cite{MN2}, we assume the existence of $\delta_0 >0$ such that, along a sequence $\varepsilon \to 0_+$, one has
\be
\label{absurd}
|\im\rho| =\O(e^{-2(\alpha_0+\delta_0)L/\varepsilon}).
\ee 
Proposition \ref{LDisc} (added to standard Sobolev estimates) tells us that for any $R_1>R_0>0$ such that $\overline{\mathcal B}\subset \{ |x| < R_0\}$, we have,
\be
\label{decu}
\Vert u_\varepsilon\Vert_{H^1(R_0<|x| <R_1)} = \O(e^{-(\alpha_0+\delta_0)L/\varepsilon}).
\ee
Still following the procedure used in \cite{MN2}, we see that this estimate can be propagated up to the boundary of $\mathcal B$, away from an arbitrarily small neighborhood of $M_0:=(L,0_{\R^{n-1}})$ (this is done by means of Carleman inequalities up to the boundary \cite{LR, LL}), and one obtains (see \cite{MN2}, Proposition 6.1),
\begin{proposition}[Martinez-N\'ed\'elec \cite{MN2}]\sl
\label{estbound}
Under the assumption (\ref{absurd}), for any neighborhood $\mathcal U$ of $M_0$ and any compact set $K\subset \R^n$, there exists $\delta_K >0$ such that,
$$
\Vert u_\varepsilon\Vert_{H^1({\mathbf E}\cap K \backslash {\mathcal U})} =\O(e^{-(\alpha_0+\delta_K)L/\varepsilon}),
$$
uniformly as $\varepsilon \to 0_+$.
\end{proposition}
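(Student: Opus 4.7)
The goal is to propagate the exponential smallness (\ref{decu}) from the annulus $\{R_0<|x|<R_1\}\subset\mathbf E$ up to any compact portion of $\overline{\mathbf E}$ that stays away from the mouth $M_0$. Since $u_\varepsilon$ solves the elliptic equation $(-\Delta-\rho(\varepsilon))u_\varepsilon=0$ on the connected set $\mathbf E$ with $\rho(\varepsilon)$ bounded, and vanishes on $\partial\mathbf E\setminus\{M_0\}$, this is a quantitative unique continuation problem, to be handled by chaining local Carleman estimates (interior ones in the bulk of $\mathbf E$, and up-to-the-boundary ones on $\partial\mathcal B\setminus\mathcal U$, following \cite{LR, LL}).

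First, cover the compact set $(\overline{\mathbf E}\cap K)\setminus\mathcal U$ by a finite chain of open charts $U_0,U_1,\dots,U_N$ such that: $U_0$ lies entirely inside the annulus where the starting estimate (\ref{decu}) holds; consecutive charts overlap, $U_{j-1}\cap U_j\neq\emptyset$; and each $U_j$ is disjoint from $\mathcal U$. Charts meeting $\partial\mathcal B$ are taken as boundary charts in which $\partial\mathcal B$ becomes a hyperplane, which is possible because the boundary is smooth away from the single point $M_0$. The chain ``walks around'' $M_0$ through a bounded number of charts, depending only on $K$ and $\mathcal U$.

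Next, on each chart equip the operator $-\Delta-\rho(\varepsilon)$ with a real Carleman weight $\phi_j$ satisfying Hörmander's (sub)pseudoconvexity condition; at boundary charts use a weight compatible with the Dirichlet condition as in \cite{LR, LL}. For a cutoff $\chi_j$ supported in $U_j$ and equal to $1$ on a smaller $V_j\Subset U_j$, apply the Carleman estimate to $\chi_j u_\varepsilon$. The source term $[-\Delta,\chi_j]u_\varepsilon$ is supported in $U_j\setminus V_j$, which can be arranged to lie in $U_{j-1}$, so it is already controlled by the bound at the previous step. Optimising over the Carleman parameter $h$ yields a three-balls/Hadamard-type interpolation inequality
\[
\|u_\varepsilon\|_{H^1(V_j)}\leq C\,\|u_\varepsilon\|_{H^1(U_{j-1})}^{\kappa_j}\,\|u_\varepsilon\|_{H^1(U_j)}^{1-\kappa_j},
\]
with $\kappa_j\in(0,1)$ depending only on the geometry. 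Since $\|u_\varepsilon\|_{H^1(U_j)}=\Og(1)$ by the normalization and elliptic regularity, an induction starting from the bound (\ref{decu}) of rate $\alpha_0+\delta_0$ produces, after $N$ steps, an estimate with rate $\alpha_0+\delta_K$ where $\delta_K:=\delta_0\prod_j\kappa_j>0$.

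The main obstacle is designing the chain so that it really can be closed: the forbidden region near $M_0$ must be circumvented using boundary charts of $\partial\mathcal B$, which requires the boundary Carleman estimates of \cite{LR, LL} to apply with constants that are uniform as the base point moves along the compact portion of $\partial\mathcal B\setminus\mathcal U$. A secondary point is that the product of the $\kappa_j$ degrades $\delta_K$ towards $0$ as $\mathcal U$ shrinks or $K$ grows, but this is consistent with the statement, which only asserts $\delta_K>0$ without any uniformity.
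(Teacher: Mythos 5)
Your overall strategy---propagation of smallness from the annulus (\ref{decu}) to $(\overline{\mathbf E}\cap K)\setminus\mathcal U$ via a chain of local Carleman (three-ball) estimates, with boundary charts on $\partial\mathcal B$ away from $M_0$ handled as in \cite{LR, LL}---matches the approach the paper refers to from \cite{MN2}. However, there is a genuine arithmetic gap in the way you close the induction, and the gap matters: the naive chaining does \emph{not} deliver a rate strictly larger than $\alpha_0$.

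With the trivial bound $\|u_\varepsilon\|_{H^1(U_j)}=\O(1)$, the three-ball step
$\|u_\varepsilon\|_{H^1(V_j)}\leq C\,\|u_\varepsilon\|_{H^1(U_{j-1})}^{\kappa_j}\,\|u_\varepsilon\|_{H^1(U_j)}^{1-\kappa_j}$
degrades the exponent multiplicatively: if $\beta_{j-1}$ denotes the rate on $U_{j-1}$, then $\beta_j=\kappa_j\beta_{j-1}$. After $N$ steps this produces $\beta_N=(\alpha_0+\delta_0)\prod_j\kappa_j$, not the $\alpha_0+\delta_0\prod_j\kappa_j$ you wrote. Since each $\kappa_j<1$ and the chain has to wind around $\mathcal U$, the product $\prod_j\kappa_j$ can easily drop below $\alpha_0/(\alpha_0+\delta_0)$, so the resulting rate can be smaller than $\alpha_0$---which is useless, because the statement requires a rate strictly above $\alpha_0$.

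The fix, which is the crucial point here, is to use the a priori bound (\ref{upetit}) in the ``large'' ball rather than $\O(1)$: one already knows $\|u_\varepsilon\|_{H^1(\mathbf E\cap K)}=\O(e^{-(\alpha_0-\delta)L/\varepsilon})$ for every $\delta>0$. Plugging this into the interpolation gives a \emph{convex combination} of rates, $\beta_j=\kappa_j\beta_{j-1}+(1-\kappa_j)(\alpha_0-\delta)$, hence
$\beta_j-\alpha_0=\kappa_j(\beta_{j-1}-\alpha_0)-(1-\kappa_j)\delta$. Starting from $\beta_0-\alpha_0=\delta_0>0$ and taking $\delta>0$ small enough at each of the finitely many steps (allowable because (\ref{upetit}) holds for all $\delta>0$), one keeps $\beta_N-\alpha_0\geq\tfrac12\delta_0\prod_j\kappa_j>0$. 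Without this ingredient the argument does not give the claimed conclusion; with it, your sketch is consistent with what \cite{MN2} (via \cite{LR, LL}) actually does.

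Two smaller imprecisions, not fatal but worth noting: (i) the commutator $[\Delta,\chi_j]u_\varepsilon$ is supported in $U_j\setminus V_j$, which is \emph{not} in general contained in $U_{j-1}$; the standard trick is to split its support into a part inside the previous good set (controlled by induction) and a part where the Carleman weight is dominated, so the weight itself absorbs the corresponding contribution after optimising in $h$---this is where the choice of weight enters and why one does not simply get an unconditional three-ball inequality with $U_{j-1}$ on the right; (ii) the three-ball form you wrote should really have the overlap $U_{j-1}\cap U_j$ (or a ball inside it) rather than all of $U_{j-1}$ as the ``small'' set.
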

From this point, the proof starts to differ completely from that of \cite{MN2}. As a first step, we will improve this estimate by obtaining a control on the rate of decay when we come close to $M_0$. This will be achieved by using again Carleman estimates, but in the spirit of \cite{KSU}. 
 The final step will consist in propagating the estimate inside the thin tube, by using some ``hand-made'' Carleman estimate on such an $\varepsilon$-dependent domain. After that, the proof can be completed exactly as in \cite{MN2}.

\section{Estimate near $M_0$}

In order to extend the previous estimate close to $M_0$ with a good enough control on the rate of decay, we use the ideas of \cite{KSU}, in particular the notion of limiting Carleman weight. We will prove:
\begin{proposition}
\label{P_superlinear}
 Let $R>0$ and $s\in (0,1)$. Under assumption (\ref{absurd}), there exists $\delta>0$ such that for all $\nu\in (0,R)$, one has,
 $$ \|u_\eps\|_{H^1\left(\left\{\nu<|x-M_0|<R\right\}\cap \textbf{E}\right)}=\O(e^{-(\alpha_0L+\delta\nu^s)/\eps}),$$
 uniformly for $\eps >0$ small enough.
\end{proposition}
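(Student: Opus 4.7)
The plan is to upgrade Proposition \ref{estbound} near $M_0$ by a semiclassical Carleman estimate of the type developed in \cite{KSU}, performed in the exterior region $\mathbf E$ on a fixed ball $B(M_0,R)$. The idea is to construct a weight $\varphi$ on $\overline{\mathbf E}\cap B(M_0,R)$ such that $\varphi\leq 0$ on the outer sphere $\{|x-M_0|=R\}$ while $\varphi(x)\geq \delta |x-M_0|^s$ on the rest of the region, and such that $e^{\varphi/\eps}$ still delivers a valid Carleman estimate for $-\Delta-\rho$ of \cite{KSU} quality.

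The natural starting point is the limiting Carleman weight $\psi(x):=-\log|x-x_0|$, where $x_0$ is chosen inside $\mathcal B$, close to $M_0$, so that $\psi$ is smooth on $\overline{\mathbf E}\cap B(M_0,R)$. The level sets of $\psi$ are spheres around $x_0$, approximating spheres centered at $M_0$ when $x_0$ is close enough. Composing with a strictly convex function $f_s$ produces $\varphi:=f_s\circ\psi$, whose strict pseudoconvexity is guaranteed by the convexity of $f_s$ (the standard device turning a limiting weight into a usable weight in the sense of \cite{KSU}) and whose level-set spacing is tuned to produce the $|x-M_0|^s$ behavior near $M_0$. The restriction $s<1$ is then seen to arise as the sharpest growth one can impose on $f_s$ while retaining the improved Carleman estimate.

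Apply the resulting Carleman inequality to $u_\eps$ on $\Omega':=\{\nu<|x-M_0|<R\}\cap\mathbf E$, at semiclassical parameter $h=\eps$. Schematically one obtains
\[
\eps\int_{\Omega'}e^{2\varphi/\eps}\bigl(\eps^2|\nabla u_\eps|^2+|u_\eps|^2\bigr)\,dx \leq C\int_{\Omega'}e^{2\varphi/\eps}|\eps^2\Delta u_\eps|^2\,dx + \mathcal B_\eps,
\]
where the bulk term on the right is $\O(\eps^4)\int e^{2\varphi/\eps}|u_\eps|^2$ (since $-\Delta u_\eps=\rho u_\eps$ with $\rho=\O(1)$) and so is absorbed into the left for small $\eps$, while $\mathcal B_\eps$ collects boundary contributions. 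The boundary of $\Omega'$ has three parts. On $\partial\mathbf E\cap B(M_0,R)$, the Dirichlet condition $u_\eps=0$ reduces the contribution to one weighted by $\partial_\nu\varphi$ acting on $(\eps\partial_\nu u_\eps)^2$; the geometry of $\psi$ (with $\nabla\psi$ pointing towards $x_0\in\mathcal B$, in the direction of the outward normal to $\mathbf E$) yields a favorable sign, so this term is either dropped or absorbed. On $\{|x-M_0|=R\}$, $e^{2\varphi/\eps}\leq 1$ combines with Proposition \ref{estbound} to bound the contribution by $\O(e^{-2(\alpha_0+\delta')L/\eps})$ for some $\delta'>0$. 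The term on $\{|x-M_0|=\nu\}$ is what we wish to bound and is moved to the left-hand side by trace considerations.

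Using the lower bound $\varphi\geq \delta|x-M_0|^s$ on $\Omega'$, combining the previous steps produces
\[
\|u_\eps\|_{L^2(\Omega')}^2 \leq C\,e^{-2(\alpha_0+\delta')L/\eps}\,e^{-2\delta\nu^s/\eps},
\]
and the $\eps^2|\nabla u_\eps|^2$ term in the Carleman estimate, together with standard elliptic regularity up to the Dirichlet boundary, upgrades this to the announced $H^1$ bound. The main obstacle is the construction of $\varphi$: one must simultaneously satisfy the pseudoconvexity required by \cite{KSU}, engineer the $\nu^s$ gain close to $M_0$, and obtain the correct sign of $\partial_\nu\varphi$ on $\partial\mathbf E$ near $M_0$, where the geometry is non-flat and depends implicitly on the transversality assumption (\ref{hyp2}); the tension between these three demands is precisely what forces $s<1$.
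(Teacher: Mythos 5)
Your plan takes a genuinely different route from the paper: you want a single Carleman estimate with a custom weight $\varphi=f_s\circ\psi$ built from a fixed-center limiting weight $\psi=-\log|x-x_0|$ and a strictly convex $f_s$, whereas the paper uses the \emph{pure} limiting Carleman weight $\log|x-M_1|$ (no convexification) but \emph{iterates} over a geometric sequence of annuli $\{q^{k+1}<|x-M_0|<q^k\}$, moving the center $M_1:=M_0-(\tfrac18 q^{k+1},0)$ closer to $M_0$ and re-tuning the semiclassical parameter $h=\eps/\mu_k$ at each step; the $\nu^s$ gain comes from geometric accumulation across steps, not from the shape of the weight.

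The gap in your proposal is exactly the piece you defer as the ``main obstacle'', and it is not a mere technicality. With $x_0\in\mathcal B$ fixed at distance $d_0>0$ from $M_0$, the weight $\psi=-\log|x-x_0|$ is smooth and bounded on $\mathbf E\cap B(M_0,R)$, and for $|x-M_0|\ll d_0$ one has $|x-x_0|=d_0+\O(|x-M_0|)$, so $\psi$ (and hence $\varphi=f_s(\psi)$ for any $f_s$) is constant to leading order near $M_0$. It therefore cannot exhibit the growth $\varphi(x)\geq\delta|x-M_0|^s$ as $|x-M_0|\to 0$, which is the entire point of the construction. Moreover, there is a sign conflict you cannot escape with a radial weight around a point $x_0\in\mathcal B$: writing $\varphi=g(|x-x_0|)$, one has $\partial_n\varphi=g'(|x-x_0|)\frac{(x-x_0)\cdot\vec n}{|x-x_0|}$ on $\partial\mathbf E$, and for $x\in\partial\mathcal B$ near $M_0$ and $x_0$ just inside $\mathcal B$ near $M_0$ one has $(x-x_0)\cdot\vec n<0$ (with $\vec n$ outward to $\mathbf E$, i.e.\ inward to $\mathcal B$). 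So $\partial_n\varphi<0$ (the favorable sign for the Carleman boundary term) forces $g'>0$, i.e.\ $\varphi$ \emph{increasing} away from $x_0$, hence \emph{small} near $M_0$ and \emph{large} at $|x-M_0|=R$ --- the opposite of what your plan requires ($\varphi$ peaked near $M_0$, $\varphi\leq 0$ at $R$). The paper sidesteps this tension because its weight $\log|x-M_1|$ is indeed small near $M_0$ and large far away; the improvement at scale $\nu\sim q^k$ is manufactured by the choice $h=\eps/\mu_k$ with $\mu_k\sim q^{ks}$ and by the induction hypothesis on the outer annulus, not by the weight being large near $M_0$.

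There is a second, related issue of uniformity: the transversality hypothesis~(\ref{hyp2}) gives the sign $\nabla\varphi\cdot\vec n<0$ for the weight $\log|x-M_1|$ only on a ball $|x-M_0|\lesssim q^{k/2}$ whose radius shrinks with the distance of $M_1$ to $M_0$, and the Carleman constant on the annulus $\{|x-M_1|<3q^k\}$ is allowed to depend on $k$. A single-shot estimate on $\{\nu<|x-M_0|<R\}$ would have to be uniform in $\nu$, which is not something Proposition~3.2 of \cite{KSU} provides out of the box. Your scheme could perhaps be salvaged by making $x_0$ (and hence $f_s$) depend on $\nu$ and localizing, but at that point you would essentially be reinventing the paper's induction; as written, the crucial weight does not exist and the argument does not close.
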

\begin{remark}\sl
In Section 5, we will use this proposition with $s=\frac12$.
\end{remark}
\begin{proof}
We can assume $R>1$. Let $q\in (0,1)$ be a small parameter depending on $s$ and the geometry, to be specified later. It is sufficient to prove that there exists $\delta>0$ such that for all $k\geq 1$, 
\be
 \label{induction}
\|u_\eps\|_{H^1\left(\left\{q^k<|x-M_0|<R\right\}\cap \textbf{E}\right)}=\O(e^{-(\alpha_0L+\delta q^{ks})/\eps})
\ee
 uniformly for $\eps >0$ small enough.
We prove \eqref{induction} by induction. Proposition \ref{estbound} proves that there exists $\delta>0$ such that \eqref{induction} holds with $k=1$.

We next assume \eqref{induction} for some $k\geq 1$. We  set,
$$M_1:=(L-\frac18q^{k+1}, 0_{\R^{n-1}})=M_0-(\frac18q^{k+1}, 0_{\R^{n-1}}).$$
For $x\in \R^n$, we define,
$$
\varphi(x):= \ln |x-M_1|,
$$
and we denote by
$$
a(x,\xi ):= \xi^2-(\nabla\varphi (x))^2\quad ;\quad b(x,\xi):= 2\nabla\varphi (x)\cdot \xi
$$
the real and imaginary parts of the principal symbol of the semiclassical operator $e^{-\varphi /h}(-h^2\Delta)e^{\varphi /h}$. Then, as observed in \cite{DKSU},  $\varphi$ is a limiting Carleman weight on ${\mathbf E}$ in the sense of \cite{KSU}, that is $|\nabla\varphi|\not =0$ on ${\mathbf E}$,  and $\{a,b\}=0$ on the set $\{ a=b=0\}$ (where $\{a,b\}$ stands for the Poisson bracket between $a$ and $b$).

Moreover, thanks to (\ref{hyp2}), for $x\in\partial{\mathcal B}$ such that $\left|x-M_0\right|\leq \frac{1}{C}q^{k/2}$, where $C>0$ is a constant given by the geometry of $\mathcal{B}$, we have
$$
\nabla\varphi (x)\cdot \vec{n} <0,
$$
where $\vec{n}$ stands for the outward pointing unit normal to ${\mathbf E}$ (or, equivalently, the inward pointing unit normal to ${\mathcal B}$).
We can take $q$ small, so that $3q^k\leq \frac{1}{C}q^{k/2}$ for any $k\geq 1$.
Applying Proposition 3.2 of \cite{KSU} with an open set of the form,
$$
V:= {\mathbf E}\cap \{ |x-M_1| < 3q^k\},
$$
we see that, for any $v\in C^\infty (\overline{V})\cap C_0^\infty (\overline{\mathbf E}\cap \{ |x-M_1| < R\})$ such that $v\left|_{\partial{\mathbf E}}\right. =0$, 
one has,
\be
\|e^{\varphi /h}v\|^2_{L^2(V)} + h^2\|e^{\varphi /h}\nabla v\|^2_{L^2(V)}\leq C_0h^2\|e^{\varphi /h}\Delta v\|^2_{L^2(V)},
\ee
where the positive constant $C_0$ does not depend on $v$ and on $h>0$ small enough (but might depend on $k$).

Next, we apply this estimate with $v(x)=\chi (|x-M_1|) u_\eps(x)\indic_{\mathbf{E}}$, where 
$$\chi\in C_0^\infty \left(\left(\frac{q^{k+1}}{4},3q^k\right);[0,1]\right),  \quad \chi =1 \text{ on }\left[\frac{q^{k+1}}{2},2q^k\right],$$
and $\indic_{\mathbf{E}}$ stands for the characteristic function of $\mathbf{E}$.
We also take $h:=\varepsilon/\mu_k$, where $\mu_k>0$ will be fixed later on. Setting $r:=|x-M_1|$, we obtain
$$
\| r^{\mu_k/\varepsilon}v\|^2+\varepsilon^2\mu_k^{-2}\|r^{\mu_k/\varepsilon} \nabla v\|^2\leq C_0\varepsilon^2\mu_k^{-2}\|r^{\mu_k/\varepsilon}([\Delta, \chi]u-\rho v)\|^2,
$$
and thus, for $\varepsilon$ small enough,
\be
\label{Car1}
\| r^{\mu_k/\varepsilon}v\|^2+\varepsilon^2\mu_k^{-2}\|r^{\mu_k/\varepsilon} \nabla v\|^2\leq 2C_0\varepsilon^2\mu_k^{-2}\|r^{\mu_k/\varepsilon}[\Delta, \chi]u\|^2.
\ee
Now, we have 
$$ \mathrm{Supp}\,\nabla\chi \subset \left\{r\in \left[\frac{q^{k+1}}{4},\frac{q^{k+1}}{2}\right]\cup \left[2q^k,3q^k\right]\right\}.$$ 
Note that by the definition of $M_1$, $|x-M_1|\geq 2q^k\Longrightarrow |x-M_0|\geq q^k$. By the induction hypothesis (here and in the sequel $C$ denotes a large positive constant depending on $k$, that may change from line to line)
$$ \|r^{\mu_k/\varepsilon}[\Delta, \chi]u\|^2_{L^2\left(\left\{2q^k\leq r\leq 3q^k\right\}\cap \textbf{E}\right)} \leq C (3q^k)^{2\mu_k/\eps}e^{-(2\alpha_0L+2\delta q^{ks})/\eps}.$$
By the a priori estimate (\ref{upetit}) on $u$, for any $\delta'>0$ there exists $C$ such that for small $\eps>0$,
$$ \|r^{\mu_k/\varepsilon}[\Delta, \chi]u\|^2_{L^2\left(\left\{q^{k+1}/4\leq r\leq q^{k+1}/2\right\}\cap \textbf{E}\right)} \leq C \left( \frac{q^{k+1}}{2} \right)^{2\mu_k/\eps}e^{-(2\alpha_0L-2\delta')/\eps}. $$
We take $\delta'=\mu_k\log \frac 65$ and obtain
$$ \|r^{\mu_k/\varepsilon}[\Delta, \chi]u\|^2_{L^2\left(\left\{q^{k+1}/4\leq r\leq q^{k+1}/2\right\}\cap \textbf{E}\right)} \leq C \left( \frac 35 q^{k+1} \right)^{2\mu_k/\eps}e^{-2\alpha_0L/\eps}.$$
Combining with (\ref{Car1}) we obtain that for small $\eps>0$,
\begin{multline*}
\left( \frac{3}{4}q^{k+1} \right)^{2\mu_k/\eps} \|u\|^2_{H^1\left(\left\{\frac 34 q^{k+1} \leq r\leq 2q^k\right\}\cap \textbf{E}\right)}\\
\leq C\left[\left( \frac 35q^{k+1} \right)^{2\mu_k/\eps}+\left(3q^k\right)^{2\mu_k/\eps}e^{-\frac{2\delta}{\eps} q^{ks}} \right] e^{-2\alpha_0L/\eps},
\end{multline*}
that is 
\begin{multline}
\label{Car2}
\|u\|^2_{H^1\left(\left\{\frac 34 q^{k+1} \leq r\leq 2q^k\right\}\cap \textbf{E}\right)}\\
\leq C\left[\left( \frac 45\right)^{2\mu_k/\eps}+\left(\frac 4q\right)^{2\mu_k/\eps}e^{-\frac{2\delta}{\eps} q^{ks}} \right] e^{-2\alpha_0L/\eps}.
\end{multline}
At this point, we fix $\mu_k$ in such a way that one has,
$$\left( \frac 45\right)^{2\mu_k/\eps}=\left(\frac 4q\right)^{2\mu_k/\eps}e^{-\frac{2\delta}{\eps} q^{ks}}$$
This gives $\mu_k := \frac{\delta q^{ks}}{\log 5-\log q} $
(which is indeed $>0$ if $q$ is small), and (\ref{Car2}) becomes,
\begin{equation}
\label{Car3}
\|u\|^2_{H^1\left(\left\{\frac 34 q^{k+1} \leq r\leq 2q^k\right\}\cap \textbf{E}\right)}
\leq Ce^{\frac{2\delta q^{ks}\log\frac{4}{5}}{\eps\log \frac{5}{q}}}e^{-2\alpha_0L/\eps}.
\end{equation} 
We next choose $q$ so small that $\log \frac 54\geq q^s\log \frac 5q$. This yields 
$$\frac{2\delta q^{ks}\log\frac{4}{5}}{\eps\log \frac{5}{q}}     \leq -\frac{2q^{(k+1)s}\delta}{\eps}.$$
From \eqref{Car3}, we deduce
\begin{equation*}
\|u\|^2_{H^1\left(\left\{\frac 34 q^{k+1} \leq r\leq 2q^k\right\}\cap \textbf{E}\right)}
\leq C e^{-2\delta q^{(k+1)s}/\eps}e^{-2\alpha_0L/\eps}.
\end{equation*} 
Since $q^{k+1}\leq |x-M_0|\leq q^k\Longrightarrow \frac{3}{4}q^{k+1}\leq |x-M_1|\leq 2q^k$, we obtain
\begin{equation}
\label{Car4}
\|u\|^2_{H^1\left(\left\{q^{k+1} \leq |x-M_0|\leq q^k\right\}\cap \textbf{E}\right)}
\leq C e^{-2\delta q^{(k+1)s}/\eps}e^{-2\alpha_0L/\eps}.
\end{equation} 
By the induction hypothesis, we can replace $q^{k+1}\leq |x-M_0|\leq q^k$ in the left-hand side of (\ref{Car4}) by $q^{k+1}\leq |x-M_0|\leq R$, concluding the proof of (\ref{induction}) at rank $k+1$.
\end{proof}


\section{Estimate inside the neck}
In this section we prove:
\begin{proposition}
\label{P_neck}
Let $r_0\in (0,L)$, and assume (\ref{absurd}). Then, there exists a small constant $\delta_1>0$ such that, 
 \be
 \label{troppetit}
 \|u_{\eps}\|_{H^1([r_0,L]\times D_{\eps})}=\O(e^{-(\alpha_0+\delta_1)r_0/\eps}),
 \ee
 uniformly for $\eps >0$ small enough.
\end{proposition}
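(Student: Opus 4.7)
The plan is to propagate the exterior decay of Proposition \ref{P_superlinear} into the interior of the neck via a Carleman estimate on $\mathcal{T}(\varepsilon)$ tailored to the $\varepsilon$-thin cylindrical geometry. The natural Carleman weight should capture the intrinsic decay rate $\alpha_0/\varepsilon$ along the tube, with a small extra budget $\delta_1 > 0$ to produce the claimed improvement over the cavity rate.

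Let $\alpha_1 > \alpha_0$ denote the square root of the second Dirichlet eigenvalue of $-\Delta_{D_1}$, fix $\delta_1 \in (0, \alpha_1 - \alpha_0)$, and use the weight $\psi(x_1) = (\alpha_0 + \delta_1) x_1$ on the cylindrical portion of the tube with $\varepsilon$ as semiclassical parameter. Conjugating $-\Delta - \rho(\varepsilon)$ by $e^{\psi/\varepsilon}$ and integrating by parts against $\chi(x_1)^2 e^{2\psi/\varepsilon} |u_\varepsilon|^2$ for a cutoff $\chi$ supported in a slight thickening of $[r_0, L]$ yields an identity whose main term is
\[
\int \chi^2 e^{2\psi/\varepsilon}\bigl(|\nabla u_\varepsilon|^2 - \tfrac{(\alpha_0 + \delta_1)^2}{\varepsilon^2} |u_\varepsilon|^2 - \re\rho\, |u_\varepsilon|^2\bigr)\, dx \lesssim [\text{cutoff contributions}].
\]
By the Poincar\'e inequality $\int_{D_\varepsilon}|\nabla_{x'} w|^2 \geq \alpha_1^2 \varepsilon^{-2} \int_{D_\varepsilon}|w|^2$ for $w$ orthogonal (in $L^2(D_\varepsilon)$) to the rescaled first transversal Dirichlet eigenfunction $\tilde\phi_0(x') := \varepsilon^{-(n-1)/2}\phi_0(x'/\varepsilon)$, the left-hand side is coercive on the orthogonal complement of $\tilde\phi_0$, controlling all higher transversal modes with room to spare.

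The first transversal mode $c_0(x_1) := \langle u_\varepsilon(x_1, \cdot), \tilde\phi_0\rangle_{L^2(D_\varepsilon)}$ must be handled separately. On the cylindrical part of $\mathcal{T}(\varepsilon)$ it satisfies the explicit ODE $-c_0''(x_1) + (\alpha_0^2/\varepsilon^2 - \rho(\varepsilon)) c_0 = 0$, so $c_0(x_1) = A\, e^{\kappa_0 x_1} + B\, e^{-\kappa_0 x_1}$ with $\kappa_0 \simeq \alpha_0/\varepsilon$. Applying Proposition \ref{P_superlinear} with $s = 1/2$ and $\nu$ of order one, a trace estimate onto a cross-section slightly inside the mouth provides $|c_0(L - b)| = \O(e^{-(\alpha_0(L - b) + \delta'')/\varepsilon})$ for some $\delta'' > 0$ and small $b > 0$. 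Combined with a crude a priori bound on $c_0$ at the cavity end, this forces the growing coefficient $A$ to be smaller than the naive cancellation value $B e^{-2\alpha_0 L/\varepsilon}$ by an extra factor $e^{-\delta''/\varepsilon}$; a direct estimation then yields $|c_0(x_1)| \leq C e^{-(\alpha_0 + \delta_1) x_1/\varepsilon}$ on $[r_0, L]$ after shrinking $\delta_1$ if necessary.

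The main technical obstacle is precisely that the Carleman weight $\psi = (\alpha_0 + \delta_1) x_1$ sits beyond the threshold imposed by the first transversal eigenvalue $\alpha_0^2/\varepsilon^2$: the conjugated operator fails to be coercive on the first mode, and the estimate closes only once the refined mouth bound from Proposition \ref{P_superlinear} is inserted to compensate. Carefully tracking the commutator terms produced by cutoffs near $x_1 = r_0$ and near $x_1 = L$ (where $\mathcal{T}(\varepsilon)$ is no longer exactly cylindrical), and matching the resulting boundary contributions against the $\sqrt\nu$ improvement from the previous section, is the delicate part of the argument.
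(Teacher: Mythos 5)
Your strategy — propagate the exterior decay from Proposition~\ref{P_superlinear} into the neck by a Carleman estimate — is the right one, but the specific plan has a structural gap that the paper avoids with a small but decisive modification of the weight.

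The paper uses the weight $\varphi(x_1)=\alpha_1 x_1/\eps + x_1^2/2$ rather than the purely linear $\psi(x_1)=(\alpha_0+\delta_1)x_1$. The convexity $\varphi''\equiv 1$, combined with $(\varphi')^2\geq \alpha_1^2/\eps^2$, makes the commutator term $4\int\varphi''(\varphi')^2|w|^2\geq 4(\alpha_1/\eps)^2\int|w|^2$ in the basic identity \eqref{neck7'}, so the conjugated operator is coercive for \emph{every} value of $\alpha_1>0$, on all transversal modes at once, with no decomposition whatsoever. There is therefore no threshold at $\alpha_0$ to worry about. Your entire ``first-mode handled separately by an explicit ODE'' subroutine is introduced only to patch up the non-coercivity of the linear weight, and the paper simply does not have this problem.

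More importantly, the first-mode subroutine as written does not close. First, the mode decomposition $u_\eps(x_1,\cdot)=c_0(x_1)\tilde\phi_0+\dots$ and the ODE for $c_0$ are only valid on the purely cylindrical part of $\mathcal{T}(\eps)$; near $x_1\simeq L$ the cross-section opens into the exterior and the decomposition loses meaning, which is precisely where you need to extract information. Second, the claimed ``trace estimate onto a cross-section slightly inside the mouth'' at $x_1=L-b$ does not follow from Proposition~\ref{P_superlinear}: that proposition controls $u_\eps$ only on $\{\nu<|x-M_0|<R\}\cap\mathbf{E}$, i.e.\ in the exterior, and the cross-section $\{x_1=L-b\}\times D_\eps$ with fixed $b>0$ lies entirely inside the neck, disjoint from $\mathbf{E}$. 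Getting a bound there requires propagating through the tube, which is what you are trying to prove. Third, even granting a bound on $c_0(L-b)$, the conclusion $|c_0(x_1)|\leq C e^{-(\alpha_0+\delta_1)x_1/\eps}$ does not come out of ``direct estimation'': the decaying branch of the ODE solution emanating from the cavity end only gives $e^{-\alpha_0(x_1-\mu)/\eps}$, with no $\delta_1$ improvement, unless one first shows $c_0(\mu)$ is itself exponentially small — a step that would again need to be justified.

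The paper's proof bypasses all of this: one applies Lemma~\ref{L_Carleman} (the Carleman inequality with the convexified weight) to $\chi u_\eps$, where the cutoff $\chi$ produces commutators in two places — a thin slab $\nu\leq x_1\leq 2\nu$ near the cavity end, which is cheap because the weight there is only $e^{O(\nu)/\eps}$, and an exterior annulus $\nu\leq |x-M_0|\leq\nu+\nu^2$, where Proposition~\ref{P_superlinear} with $s=1/2$ supplies the decisive extra factor $e^{-2\delta\nu^{1/2}/\eps}$. Choosing $\alpha_1=\alpha_0+\nu^{3/4}$ makes $\nu^{3/4}$ small compared to $\nu^{1/2}$ so the exterior commutator is still negligible, while still beating $\alpha_0$ in the conclusion. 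The whole estimate is done on $\Omega(\eps)$ at once, with no separation of the first mode. I'd suggest reworking your argument along these lines: keep the linear weight's slope, add a strictly convex correction, and you will find the coercivity you were trying to manufacture by hand.
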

The proof relies on the following Carleman inequality:
\begin{lemma}
\label{L_Carleman} Let $\eta_0>0$ be small enough.
 There exists $C>0$ such that for all $\alpha_1>0$ and $\eps>0$, for all $v\in C^{\infty}(\overline{\Omega(\eps)})$ verifying,
 \begin{gather}
  \label{neck1}
  v_{\restriction \partial\Omega(\eps)}=0;\\
  \label{neck2}
 {\rm Supp}\, v\,\subset \,\{x_1>0\};\\
  \label{neck3}
  {\rm Supp}\, v\cap {\mathbf E}\,\subset\, \{|x-M_0|< \eta_0\};
 \end{gather}
one has,
\begin{equation}
 \label{neck4}
 \frac{\alpha_1}{\eps}\|e^{\alpha_1x_1/\eps}v\|_{L^2(\Omega(\eps))}\leq C\|e^{\alpha_1x_1/\eps}\Delta v\|_{L^2(\Omega(\eps))}.
\end{equation} 
\end{lemma}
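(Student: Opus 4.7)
The idea is a standard Carleman-type conjugation argument, adapted to the $\eps$-dependent geometry of the resonator. Set $\tau := \alpha_1/\eps$ and $w := e^{\tau x_1} v$; since the weight is linear in $x$, a direct computation yields
\begin{equation*}
e^{\tau x_1}\Delta v = P_\tau w := \Delta w - 2\tau\,\partial_{x_1}w + \tau^2 w = (\partial_{x_1}-\tau)^2 w + \Delta_{x'}w,
\end{equation*}
where $x'=(x_2,\ldots,x_n)$, and $w|_{\partial\Omega(\eps)}=0$ by \eqref{neck1}. The claim then becomes $\tau\|w\|_{L^2(\Omega(\eps))}\le C\|P_\tau w\|_{L^2(\Omega(\eps))}$.

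Testing $P_\tau w$ against $\bar w$ and integrating by parts, using the Dirichlet boundary condition together with the compactness of $\mathrm{supp}\,v$ (which comes from \eqref{neck2} and \eqref{neck3}), yields the basic energy identity
\begin{equation*}
\re\langle P_\tau w, w\rangle = \tau^2\|w\|^2 - \|\partial_{x_1}w\|^2 - \|\nabla_{x'}w\|^2.
\end{equation*}
On each slice of the neck $\mathcal T(\eps)$, $w$ vanishes on $\partial D_\eps$, and since the first Dirichlet eigenvalue of $-\Delta_{D_\eps}$ equals $(\alpha_0/\eps)^2$, Poincar\'e gives
\begin{equation*}
\|\nabla_{x'}w(x_1,\cdot)\|^2_{L^2(D_\eps)} \ge (\alpha_0/\eps)^2\,\|w(x_1,\cdot)\|^2_{L^2(D_\eps)}.
\end{equation*}
Plugging this spectral gap into the energy identity and applying Cauchy--Schwarz $|\langle P_\tau w,w\rangle|\le \|P_\tau w\|\|w\|$ closes the estimate on the portion of $\mathrm{supp}\,v$ inside the neck.

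For the piece of $\mathrm{supp}\,v$ in $\mathbf E\cap\{|x-M_0|<\eta_0\}$, the cross section is no longer thin and the Poincar\'e route fails, so I would import extra positivity from the boundary. Split $P_\tau=A+B$ with $A:=\Delta+\tau^2$ formally selfadjoint and $B:=-2\tau\,\partial_{x_1}$ formally anti-selfadjoint. Because the weight is linear, the interior commutator $[A,B]$ vanishes identically, and a further integration by parts gives
\begin{equation*}
 2\re\langle Aw,Bw\rangle = -2\tau\!\int_{\partial\Omega(\eps)}|\partial_\nu w|^2\,n_1\,d\sigma,
\end{equation*}
so that $\|P_\tau w\|^2 = \|Aw\|^2+\|Bw\|^2 - 2\tau\int_{\partial\Omega(\eps)}|\partial_\nu w|^2\,n_1\,d\sigma$. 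On the lateral surface of the neck $n_1=0$, while on $\partial\mathbf E\cap\{|x-M_0|<\eta_0\}$ the transversality hypothesis \eqref{hyp2} at $M_0$ forces $n_1<0$ provided $\eta_0$ is small enough; the boundary contribution is therefore nonpositive, so it can be dropped to give $\|Aw\|^2+\|Bw\|^2\le\|P_\tau w\|^2$. Expanding the squares on the left and combining with the Poincar\'e gap closes the $\mathbf E$-piece.

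The main obstacle is typical of Carleman estimates with a linear weight: since $[A,B]\equiv 0$ no positivity can be harvested from the interior, so everything must be imported either from the spectral gap $(\alpha_0/\eps)^2$ in the thin cross section of the neck or from the favorably signed boundary term at $\partial\mathbf E$ near $M_0$. Making these two very different contributions cooperate across the neck region and the $\mathbf E$-piece is the delicate point, and it is precisely what forces the smallness assumption on $\eta_0$ together with the transversality hypothesis \eqref{hyp2}.
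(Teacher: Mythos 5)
Your proposal identifies the right structural difficulty (with a purely linear weight the interior commutator $[A,B]$ vanishes, so positivity must come from elsewhere), but the two substitutes you propose do not actually close the estimate, and the paper's proof is built precisely to avoid this dead end.

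First, the neck piece. Your Poincar\'e argument on the slices $D_\eps$ gives $\|\nabla_{x'}w\|^2_{L^2(D_\eps)}\geq (\alpha_0/\eps)^2\|w\|^2_{L^2(D_\eps)}$, so from the energy identity you obtain $\re\langle P_\tau w,w\rangle \leq \big(\tau^2-(\alpha_0/\eps)^2\big)\|w\|^2$ on the neck contribution. With $\tau=\alpha_1/\eps$ this factor equals $(\alpha_1^2-\alpha_0^2)/\eps^2$, which is of the \emph{wrong sign} whenever $\alpha_1\geq\alpha_0$, and in the application of the lemma (proof of Proposition~\ref{P_neck}) one takes $\alpha_1=\alpha_0+\nu^{3/4}>\alpha_0$. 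Even for $\alpha_1<\alpha_0$ the constant degenerates as $\alpha_1\to\alpha_0$, whereas the lemma asserts a uniform bound for all $\alpha_1>0$. Second, the $\mathbf E$-piece. You correctly compute $2\re\langle Aw,Bw\rangle = -2\tau\int_{\partial\Omega(\eps)}n_1|\partial_\nu w|^2\,d\sigma\leq 0$ near $M_0$, which yields $\|Aw\|^2+\|Bw\|^2\leq\|P_\tau w\|^2$. But this by itself does not give $\tau^2\|w\|^2\lesssim\|P_\tau w\|^2$: in $\mathbf E$ the cross-section is not thin (its Poincar\'e constant is $O(\eta_0^{-2})\ll\tau^2$), and a mode with $\Delta w\approx -\tau^2 w$ and $\partial_{x_1}w\approx 0$ makes both $\|Aw\|$ and $\|Bw\|$ small compared to $\tau\|w\|$. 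Finally, the two contributions cannot be ``made to cooperate'' by splitting the domain, because the Laplacian couples the neck to $\mathbf E$; the energy identity and the commutator formula are global, and no regional decomposition is available.

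The paper's proof resolves this precisely by abandoning the purely linear weight and taking $\varphi(x_1)=\frac{\alpha_1}{\eps}x_1+\frac{x_1^2}{2}$. The added convex term makes $\varphi''\equiv 1>0$, and the commutator identity \eqref{neck7'} then produces the interior term $4\int\varphi''(\varphi')^2|w|^2\geq 4(\alpha_1/\eps)^2\int|w|^2$, which is exactly the $\tau^2\|w\|^2$ gain you are missing. The boundary term $-2\int_{\partial\Omega}n_1\varphi'|\partial_n w|^2$ has the favourable sign for the same reason you give (transversality at $M_0$ and $n_1=0$ on the lateral boundary of the neck), and the quadratic correction to the weight is harmlessly absorbed at the end since $x_1$ is bounded on the support of $v$. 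So the convexification is not a technical convenience: it is the source of the interior positivity that your linear-weight argument cannot supply for the range of $\alpha_1$ needed.
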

\begin{proof}[Proof of the lemma]
We start with a general computation, valid for any open subset $\Omega$ of $\mathbb{R}^n$ with smooth boundary, and any function $v\in C^{\infty}(\overline{\Omega})$ satisfying Dirichlet boundary conditions. Let $\varphi\in C^{4}(\overline{\Omega})$ be real-valued. We will compute $\|e^{\varphi}\Delta v\|$. 

We have $e^{\varphi}\Delta v=e^{\varphi}\Delta e^{-\varphi}w$, where $w=e^{\varphi}v$. Moreover, $-e^{\varphi}\Delta e^{-\varphi}=A+iB$, where $A$ and $B$ are the formally self-adjoint operators given by 
$$Aw:=-\Delta w -|\nabla \varphi|^2w,\qquad  iBw:=2(\nabla\varphi)\cdot\nabla w+(\Delta \varphi )w.$$
Then,
\begin{multline}
 \label{neck6}
 \|e^{\varphi} \Delta v \|^2\\
=\|(A+ iB)w\|^2=\|Aw\|^2+\|Bw\|^2+2\textrm{Re}\int_{\Omega}Aw\, \overline{iBw}\,dx.
\end{multline} 
We claim:
\begin{multline}
\label{neck7}
 2\re\int_{\Omega}Aw\, \overline{iBw}\,dx=2\int_{\Omega}\nabla \left( |\nabla \varphi|^2 \right)\cdot \nabla \varphi\, |w|^2\\ +4 \int_{\Omega} (\nabla w)^T \varphi''(x)\nabla \overline{w}
 -\int_{\Omega}\Delta^2\varphi\, |w|^2-2\int_{\partial\Omega}\partial_n\varphi|\partial_n w|^2,
\end{multline}
where $\partial_n$ is the outward pointing normal unit at the boundary of $\Omega$, $\varphi''$ is the Hessian matrix of $\varphi$, and $(\nabla w)^T$ is the transpose of $\nabla w$. 

Indeed, expanding $A w$ and $iB w$, we obtain:
\begin{align}
\label{commutator}
2\re\int_{\Omega}Aw\, \overline{iBw}
&=
-4\re \int_{\Omega} \Delta w\nabla \varphi\cdot\nabla \wbar-2\re \int_{\Omega}\Delta w\Delta\varphi \wbar
\\ 
\notag
&\quad \,-4\re\int_{\Omega} |\nabla \varphi|^2w\nabla \varphi\cdot\nabla \wbar-2\int_{\Omega}|\nabla \varphi|^2\Delta\varphi\, |w|^2\\
\notag
&=I_1+I_2+I_3+I_4.
\end{align}
We compute the terms $I_j$, $j=1,2,3$. Using the Green formula and the Dirichlet boundary condition for $w$, we obtain,
\begin{align*}
 I_1&=4 \re \int_{\Omega} \nabla w\cdot \nabla (\nabla \varphi \cdot\nabla \wbar)-4\re \int_{\partial \Omega} \partial_n w\nabla \varphi\cdot\nabla \wbar.
\end{align*}
Since $w_{\restriction \partial\Omega}=0$, one has $\nabla\varphi\cdot\nabla \wbar=\partial_n \varphi\cdot\partial_n \wbar$ on $\partial \Omega$.
Moreover,
\begin{align*}
\re \nabla w\cdot \nabla (\nabla \varphi\cdot\nabla \wbar) &= \sum_{1\leq j,k\leq n} \frac{\partial w}{\partial x_j}\frac{\partial^2\varphi}{\partial x_j\partial x_k}\frac{\partial \wbar}{\partial x_k}+\frac{1}{2}\sum_{1\leq j,k\leq n} \frac{\partial \varphi}{\partial x_k}\frac{\partial}{\partial x_k} \left|\frac{\partial w}{\partial x_j}\right|^2
\\
&=  (\nabla w)^T \varphi''\nabla \wbar +\frac{1}{2}\nabla \varphi\cdot\nabla(|\nabla w|^2).
\end{align*}
Thus,
\begin{equation}
\label{I_1} 
I_1=4 \re \int_{\Omega} (\nabla w)^T \varphi'' \nabla \wbar-2\int_{\Omega} \Delta \varphi |\nabla w|^2-2\int_{\partial\Omega} \partial_n\varphi|\partial w|^2.
\end{equation}
On the other hand,
\begin{equation*}
 I_2=2\int_{\Omega} |\nabla w|^2\Delta \varphi +2\re \int_{\Omega} \nabla(\Delta \varphi)\cdot \nabla w\,\wbar.
\end{equation*}
Since $\re \nabla w\,\wbar=\frac{1}{2}\nabla |w|^2$, this yields
\begin{equation}
 \label{I_2}
I_2=2\int_{\Omega} |\nabla w|^2\Delta \varphi -\int_{\Omega} \Delta^2\varphi |w|^2.
\end{equation}
Writing $I_3=-2\int_{\Omega} |\nabla \varphi|^2\nabla \varphi\cdot\nabla |w|^2$, we obtain
\begin{equation}
 \label{I_3}
I_3=2\int_{\Omega} |\nabla \varphi|^2\Delta \varphi \,|w|^2+2\int_{\Omega}\nabla\left(|\nabla \varphi|^2\right)\cdot\nabla \varphi\,|w|^2.
\end{equation}
Combining (\ref{commutator}), (\ref{I_1}), (\ref{I_2}) and (\ref{I_3}), we deduce (\ref{neck7}). When $\varphi$ depends only on the variable $x_1$, (\ref{neck7}) becomes:
\begin{multline}
 \label{neck7'}
 2\re\int_{\Omega}Aw\, \overline{iBw}=4\int_{\Omega} \varphi''|\partial_{x_1} w|^2+4\int_{\Omega}\varphi''(\varphi')^2|w|^2\\ 
 -\int_{\Omega}\varphi^{(4)}\,|w|^2-2\int_{\partial\Omega}n_1\varphi'|\partial_n w|^2,
\end{multline} 
where $n_1$ is the first coordinate of the outward pointing unit vector $\vec{n}$ at the boundary of $\Omega$.

We use (\ref{neck6}), (\ref{neck7'}) with $\Omega=\Omega(\eps)$, $v$ satisfying the assumptions of the lemma, and 
\begin{equation}
 \label{neck5}
 \varphi(x_1)=\frac{\alpha_1}{\eps}x_1+\frac{x_1^2}{2}.
\end{equation} 
Note that $x_1$ is a limiting Carleman weight. The addition of the strictly convex term $\frac{x_1^2}{2}$ is in the spirit of \cite{KSU}. However we cannot directly use the results of \cite{KSU} since the domain of integration $\Omega(\eps)$ depends on $\eps$.

Note that $n_1=0$ at the boundary of the neck $[0,L]\times D_{\eps}$. Chosing $\eta_0$ small, we have $n_1\varphi'(x_1)=n_1(\alpha_1/\eps+x_1)\leq 0$ on the intersection of the support of $w$ and $\partial\Omega(\eps)$. Moreover $\varphi^{(4)}=0$, $\varphi''$ is nonnegative and 
$$\varphi''(x_1)(\varphi'(x_1))^2=(\alpha_1/\eps+x_1)^2\geq \alpha_1^2/\eps^2.$$
Thus, we see that (\ref{neck7'}) implies:
$$ 2\textrm{Re}\int_{\Omega(\eps)}Aw\, \overline{iBw}\,dx\geq 4\frac{\alpha_1^2}{\eps^2}\int_{\Omega(\eps)} |v|^2e^{2\alpha_1x_1/\eps+x_1^2}.$$
Hence, in view of (\ref{neck6}),
$$\| e^{\alpha_1x_1/\eps+x_1^2/2}\Delta v\|\geq \frac{2\alpha_1}{\eps} \|e^{\alpha_1x_1/\eps+x_1^2/2}v\|. $$
Using that $x_1^2\leq (L+\eta_0)^2$ on the support of $v$, we obtain the conclusion of the lemma.
\end{proof}
\begin{proof}[Proof of Proposition \ref{P_neck}]
 We fix a small $\eps_0>0$ and let $\chi\in C_0^{\infty}(\overline{\Omega(\eps_0)})$ such that 
\begin{equation*}
 \begin{cases}
  \chi(x)=0 & \text{ if }x\in \mathcal{C}\text{ or }x\in [0,\nu]\times D_{\eps_0}\text{ or } \left(x\in \textbf{E} \text{ and } |x-M_0|\geq \nu+\nu^2\right)\\
  \chi(x)=1 & \text{ if }x\in [2\nu,L]\times D_{\eps_0}\text{ or } \left(x\in \textbf{E} \text{ and } |x-M_0|\leq \nu\right), 
 \end{cases}
\end{equation*}
where $\nu>0$ is a small parameter to be specified later. Of course, we can also consider $\chi$ as an element of $C_0^{\infty}(\overline{\Omega(\eps)})$ for $0<\eps<\eps_0$. Using Lemma \ref{L_Carleman} with $\alpha_1=\alpha_0+\nu^{3/4}$ and $v=\chi u$, we obtain, for small $\eps>0$, denoting by $C_{\nu}$ a constant depending on $\nu$ and changing from line to line:
\begin{multline}
 \label{neck8}
\frac{1}{\eps^2}  \|\chi u e^{(\alpha_0+\nu^{3/4})x_1/\eps}\|^2\\
\leq C_{\nu} \|[\Delta,\chi]ue^{(\alpha_0+\nu^{3/4})x_1/\eps}\|^2+C_{\nu}|\rho(\eps)|^2\|\chi u e^{(\alpha_0+\nu^{3/4})x_1/\eps}\|^2.
\end{multline} 
Since $\rho(\eps)$ is bounded uniformly as $\eps\to 0$, the last term of (\ref{neck8}) can be absorbed by its left-hand side for small $\eps$ Moreover
\begin{equation}
 \label{neck9}
\|[\Delta,\chi]ue^{(\alpha_0+\nu^{3/4})x_1/\eps}\|^2\leq C_{\nu} \int_{\omega_1\cup \omega_2}\left(|\nabla u|^2+|u|^2\right)e^{2(\alpha_0+\nu^{3/4})x_1/\eps}\,dx
\end{equation} 
where 
\begin{align*}
\omega_1&:=\{x=(x_1,x')\; :\; \nu\leq x_1\leq 2\nu,\; x'\in D_{\eps} \}\\
\omega_2&:=\{x\in \textbf{E}\;:\; \nu\leq |x-M_0|\leq \nu+\nu^2\}. 
\end{align*}
Since $u$ remains locally bounded in $H^1(\Omega(\eps))$ as $\eps \to 0$, we can bound from above the integral on $\omega_1$ in (\ref{neck9}) by $C_{\nu}e^{4(\alpha_0+\nu^{3/4})\nu/\eps}$. Using Proposition \ref{P_superlinear} with $s=\frac12$, and chosing $\nu$ small enough, we bound from above the integral on $\omega_2$ by 
$$C_{\nu}e^{2(\alpha_0+\nu^{3/4})(L+\nu+\nu^2)/\eps}e^{-(2\alpha_0 L+2\delta \nu^{1/2})/\eps}\leq C_{\nu}e^{-\delta \nu^{1/2}/\eps},$$
as $\eps\to 0$. From (\ref{neck8}), (\ref{neck9}) and the estimates above,
$$\frac{1}{\eps^2} \|u\|_{L^2([r_0,L]\times D_{\eps})}^2e^{2(\alpha_0+\nu^{3/4})r_0/\eps}\leq C_{\nu} e^{4(\alpha_0+\nu^{3/4})\nu/\eps}.$$
Taking $\nu$ small enough, we have $\nu^{3/4}r_0\geq 4(\alpha_0+\nu^{3/4})\nu$, and we deduce 
$$\|u\|_{L^2([r_0,L]\times D_{\eps})}^2 \leq C_{\nu}e^{-(2\alpha_0+\nu^{3/4})r_0/\eps}.$$
Using the equation $-\Delta \chi u =\rho \chi u +[\chi, \Delta]u$ with $\chi$ as before, and performing the scalar product with $\chi u$, we also deduce the same type of estimate for $\nabla u$, and this yields the conclusion of the proposition.
\end{proof}

\section{Completion of the proof of Theorem \ref{mainth}}

At this point, the completion of the proof is entirely taken from \cite{BHM1} (see also \cite{BHM2, MN1, MN2}). For the sake of completeness, here we recall the main arguments (actually, in our setting they are a little bit simpler since we do not have to care too much about negative powers of $\eps$). At first, using Assumption (\ref{hyp3}), we see that the eigenvalue $\lambda (\varepsilon)$ appearing in (\ref{vp}) is necessarily the first Dirichlet eigenvalue of ${\mathcal C}(\eps)$, and, denoting by $v_\eps$ the corresponding normalized positive eigenfunction, by \cite{HM} we know that, for all $s\geq 0$ and $\delta >0$, one has,
$$
\|u_\eps -v_\eps\|_{H^s({\mathcal C}(\eps))} =\O(e^{-(\alpha_0L-\delta)/\eps}),
$$
uniformly for $\eps >0$ small enough.

Then we use the explicit representation of $v=v_\eps$ inside the tube in terms of the transversal modes, 
\be
\label{reptube}
v=\sum_{k=0}^{+\infty} v_k(x_1)\varphi_k(x'/\varepsilon)=\sum_{k=0}^{+\infty} (a_k^+ e^{\theta_k (x_1-\mu)/\varepsilon }+ a_k^- e^{-\theta_k (x_1-\mu)/\varepsilon })\varphi_k(x'/\varepsilon),
\ee
where $a_k^{\pm}\in \R$, $\theta_k:=\sqrt{\alpha_k^2 -\varepsilon^2\lambda(\varepsilon)}$, $\alpha_k^2 :=(k+1)$-th eigenvalue of $-\Delta_{D_1}$ ($\alpha_k>0$), $\varphi_k:=$ normalized $(k+1)$-th eigenfunction of $-\Delta_{D_1}$, $x':=(x_2,\dots, x_n)$, and $\mu:=C_0\eps$ with $C_0>0$ sufficiently large such that $\{\mu\}\times D_\eps\, \cap {\mathcal C}=\emptyset$.

Using standard Sobolev estimates, we  see that $|a_k^{\pm}| =\O(\eps^{-N_0})$ for some $N_0\geq 0$ independent of $k$, and by the Dirichlet boundary condition at $x_1=L+\varepsilon_0$, we deduce that $|a_k^+|=\O(\eps^{-N_0}e^{-2\alpha_kL/\eps})$ uniformly with respect to $k$. As a consequence, for any $x_1\in [\mu,L]$, we obtain,
\be
\label{vdex1}
v_k(x_1)=v_k(\mu)e^{-\theta_k (x_1-\mu)/\eps}+\O(\eps^{-N_0}e^{-\alpha_kL/\eps}).
\ee

Now, using Proposition \ref{P_neck}, we see that, for any $r_0\in (0,L)$, the coefficients $a_k^{\pm}$'s satisfy,
$$
\sum_{k\geq 0}( |a_k^+|^2 e^{2\alpha_k L/\varepsilon} +  |a_k^-|^2 e^{-2\alpha_k r_0/\varepsilon}) =\O(e^{-2(\alpha_0+\delta_1)r_0/\eps}).
$$

In particular, $a_0^\pm=\O(e^{-\delta_1/\eps})$, and thus, 
$$
v_0(\mu )=\O(e^{-\delta_1/\eps}).
$$
Moreover, by Weyl's law and a general result of \cite{Da}, we know that,
$$
\alpha_k \sim k^{\frac2{n}}\quad ;\quad \sup_{D_1}\frac{|\varphi_k|}{\varphi_0} =\O((\alpha_k)^{\frac{n}2}) \quad (k\to\infty).
$$
 Writing,
$$
v_k(x_1)=\eps^{n-1}\int_{D_1}v(x_1, \eps y)\varphi_k(y)dy=\eps^{n-1}\int_{D_1}v(x_1, \eps y)\varphi_0(y)\frac{\varphi_k(y)}{\varphi_0(y)}dy
$$
and taking advantage of the fact that $v$ and $\varphi_0$ are non negative, we deduce,
$$
v_k(\mu)=\O((k+1)e^{-\delta_1/\eps}).
$$
Inserting in (\ref{vdex1}), and taking $C>C_0$, we obtain,
$$
\| v\|_{L^2([C\eps,L]\times D_\eps)} =\O(e^{-\delta_1/\eps}).
$$

As a consequence, using the equation $-\Delta v =\lambda v$ and standard Sobolev inequalities, we obtain the existence of a constant $\delta_2 >0$ such that, for $C>0$ large enough,
$$
\sup_{[C\varepsilon,L]\times D_{\eps}}|v_\varepsilon| =\O(e^{-\delta_2/\eps}).
$$

Then, using the boundary Harnack inequality for non-negative solutions to  parabolic equations (see, e.g., \cite{FGS, BHM2}), we deduce,
$$
\sup_{{\mathcal C}(\eps)\cap\{|x|\leq C\eps\}}|v_\eps|=\O(e^{-\delta_2/\eps}),
$$ 
and thus, for any $C>0$ large enough,
$$
\|v_\eps\|_{L^2({\mathcal C}(\eps)\cap\{|x|\leq C\eps\})}=\O(e^{-\delta_2/\eps}).
$$
Taking $\chi_\eps \in  C_0^\infty (\{|x| <2C\eps\})$, $\chi_\eps(x)=1$ for $|x|\leq C\eps$, $\partial^\alpha \chi_\eps=\O(\eps^{-|\alpha|})$, and using again the equation $-\Delta \chi_\eps v=\lambda \chi_\eps v+[\chi_\eps,\Delta]v$, we see that a similar estimates holds for $|\nabla v|$. Then, fixing $C$ sufficiently large, and defining,
$$
w:= (1-\chi_\eps(x)) v,
$$
we have $w\in (H^2\cap H^1_0)({\mathcal C})$, and,
$$
-\Delta_{\mathcal C'} w = \lambda(\eps) w + r\quad ; \quad \| r\|_{L^2({\mathcal C})} = \O(e^{-\delta_2/\eps}).
$$
Using the fact that $\lambda (\varepsilon)\to \lambda_0$ as $\eps\to 0$, and that $\lambda_0$ is simple, we easily deduce (by writing the spectral projector on $u_0$ as a contour integral of the resolvent),
$$
w = u_0 + \O(e^{-\delta_2/\eps})
$$
in $L^2({\mathcal C})$. In particular,
\be
\label{estexp0}
\|u_0\|_{L^2({\mathcal C}\cap\{|x|\leq C\eps\})} =\O(e^{-\delta_2/\eps}).
\ee
However, since $u_0 >0$ in ${\mathcal C}$, a well known result of Hopf (see, e.g., \cite{GiTr}) insures that $\partial_n u (0) <0$ (where $\partial_n$ stands for the outer normal derivative of $\mathcal C$). As a consequence, one necessarily has $u_0(x)\geq cd(x,\partial{\mathcal C})$ near 0 (with some $c>0$), and thus,
\be
\|u_0\|_{L^2({\mathcal C}\cap\{|x|\leq C\eps\})} \geq c \eps^{1+\frac{n}2}.
\ee
Estimate (\ref{estexp0}) is clearly in contradiction with this, and the proof is complete.

\begin{remark}\sl Actually, the arguments of \cite{BHM1, BHM2} are a little bit more precise, and they lead to the lower bound (\cite{BHM2}, Theorem 1.2),
 $$\|u_{\eps}\|_{L^2([r_0,L]\times D_{\eps})}\geq \frac1{C}\eps^{1+\frac{n}2}e^{-\alpha_0r_0/\eps},$$
 in contradiction with (\ref{troppetit}).
\end{remark}


\begin{thebibliography}{10}
 
  \bibitem[Be]{Be} 
 J.~ T.~Beale.
 \newblock{ Scattering frequencies of resonators.}
 \newblock {\em Comm. Pure Appl. Math.}, 26 (1973), 549-563.

\bibitem[BHM1]{BHM1}
R.M~Brown, P.D.~Hislop and A.~Martinez.
\newblock {L}ower {B}ounds on {E}igenfunctions and the first {E}igenvalue {G}ap.
\newblock {\em Differential Equations with Applications to Mathematical Physics}
\newblock  W.F.Ames, E.M.Harell, J.V.Herod (Ed.), Mathematical and Science in Engineering, Vol. 192, Academic Press
 22: 269--279,  1993.
 
 \bibitem[BHM2]{BHM2}
R.M~Brown, P.D.~Hislop and A.~Martinez.
\newblock {Lower bounds on the interaction between cavities connected by a thin tube.}
\newblock {\em Duke Math. J.}, Vol. 73, No. 1 (1994), 163-176

 
 \bibitem[Bu]{Bu} 
 N.~Burq.
 \newblock  Lower bounds for shape resonances widths of long range Schr\"odinger operators, 
 \newblock {\em Am. J. Math. }, 124, 2002.
 
  \bibitem[Da]{Da} 
 E.B.~Davies.
 \newblock  Heat Kernels and Spectral Theory
 \newblock {\em Cambridge University Press }, Cambridge, 1989.
 
  
  \bibitem[DKSU]{DKSU} 
  D.~Dos Santos Ferreira, C.E.~Kenig, M.~Salo, G.~Uhlman.
  \newblock {\em Limiting Carleman weights and anisotropic inverse problems.} \newblock Invent. Math. 178 (2009), 119-171.
 
  \bibitem[FGS]{FGS} 
  E.B.~Fabes, N.~Garofalo, S.~Salsa.
  \newblock {A backward Harnack inequality and Fatou theorem for non-negative solutions of parabolic equations.}
  \newblock{\em Ill. J. Math.} 30,  536-565, 1986. 

   
   \bibitem[Fe]{Fe} 
    C.~ A.~Fern\' andez.
 \newblock{Spectral concentration for the Laplace operator in the exterior of a resonator.}
 \newblock {\em  J. Math. Phys.} 26 (1985), no. 3, 383-384.
  
 \bibitem[FL]{FL} 
 C.~Fern\' andez, R.~Lavine.
 \newblock { Lower bounds for resonance width in potential and obstacle scattering.}
 \newblock{\em Comm. Math. Phys.,} 128, 263-284,1990.
 
  \bibitem[FLM]{FLM} 
  S.~Fujiie, A.~Lahamar-Benbernou, A.~ Martinez.
  \newblock { Width of shape resonances for non globally analytic potentials.}
  \newblock{\em J. Math. Soc. Japan} Volume 63, Number 1, 1-78, 2011. 

  \bibitem[GiTr]{GiTr}
 D.~Gilbarg, N.S. Trudinger.
\newblock Elliptic Partial Differential Equations of Second Order.
\newblock{\em   Springer-Verlag}, Berlin, 1983.
 
 \bibitem[Ha]{Ha} 
 E.~Harrel.
 \newblock {General lower bounds for resonances in one dimension.}
 \newblock{ \em Comm. Math. Phys.} 86, 221-225,1982.
 
  \bibitem[HaSi]{HaSi} 
 E.~Harrel, B.~Simon.
 \newblock {The mathematical theory of resonances whose widths are exponentially small.}
 \newblock{ \em Duke Math.} 47 n.4, 845-902,1980.
 
 \bibitem[HS]{HS} 
 B.~Helffer, J.~ Sj\"ostrand.
 \newblock { R\'esonances en limite semiclassique.}
 \newblock{\em  Bull. Soc. Math. France, M\'emoire }{\bf 24/25}, 1986.

\bibitem[HeM]{HeM}
B.~Helffer, A.~Martinez. 
\newblock {\sl Comparaison entre les diverses notions de r\'esonances.}
\newblock{ \em Helv. Phys. Acta,} Vol.60, p.992-1003, 1987.

\bibitem[HM]{HM}
P. D.~Hislop and A.~Martinez. 
\newblock Scattering resonances of a Helmholtz resonator.
\newblock{\em  Indiana Univ. Math. J. }40 no. 2, 767-788, 1991.

\bibitem[KSU]{KSU}  
C. E. Kenig, J.~Sj\"ostrand, G. Uhlmann.
\newblock The Calder\'on problem with partial data.
\newblock {\em  Ann. of Math.} (2) 165 (2007), no. 2, 567-591.

\bibitem[LL]{LL}
J. Le Rousseau and G. Lebeau. 
\newblock On Carleman estimates for elliptic and parabolic operators. Applications to unique continuation and control of parabolic equations.
\newblock{\em    ESAIM: Control, Optimisation and Calculus of Variations},  Volume 18 / Issue 03 / July 2012, pp 712-747

\bibitem[LR]{LR}
G. Lebeau and L. Robbiano. 
\newblock The Calder\'on problem with partial data.
\newblock{\em    Annals of Mathematics} 165 (2007), 567-5915.

  \bibitem[MN1]{MN1}
 A.~Martinez and L.~Nedelec
\newblock Optimal lower bound of the resonance widths for a Helmholtz tube-shaped resonator.
\newblock{\em   J. Spectr. Theory },2, 203-223, 2012.

  \bibitem[MN2]{MN2}
 A.~Martinez and L.~Nedelec
\newblock Optimal lower bound of the resonance widths for the Helmholtz resonator.
\newblock{\em   Ann. Henri Poincar\'e}, to appear.
URL: http://arxiv.org/abs/1402.6493


\bibitem[SZ]{SZ}  
J.~Sj\"ostrand and M.~Zworski.
\newblock Complex scaling and the distribution of scattering poles.
\newblock {\em J. Amer. Math. Soc.}, 4:729--769, 1991.


\end{thebibliography}
\end{document}